\documentclass{article}[11]
\usepackage{amsmath, amsthm, amssymb, color, amsrefs, authblk}

\newtheorem{thm}{Theorem}[section]
\newtheorem*{thm*}{Theorem}
\newtheorem{prop}[thm]{Proposition}
\newtheorem{lem}[thm]{Lemma}
\newtheorem{cor}[thm]{Corollary}
\newtheorem{definition}[thm]{Definition}

\DeclareMathOperator\vol{vol}

\DeclareMathOperator\GL{GL}
\DeclareMathOperator\gl{gl}
\DeclareMathOperator\LieSL{sl}

\DeclareMathOperator\U{U}
\DeclareMathOperator\SU{SU}

\DeclareMathOperator\SL{SL}

\DeclareMathOperator\Ric{Ric}

\DeclareMathOperator\Vol{Vol}

\def\C{\mathbb{C}}

\def\CP{\mathbb{CP}}
\def\RP{\mathbb{RP}}
\def\Sph{\mathbb{S}}

\def\R{\mathbb{R}}
\def\Z{\mathbb{Z}}
\def\T{\mathbb{T}}

\begin{document}

\title{Ricci curvature, the convexity of volume and minimal Lagrangian submanifolds}

\author[*]{Tommaso Pacini}
\affil[*]{Department of Mathematics, University of Torino \newline via Carlo Alberto 10, 10123 Torino, Italy \newline tommaso.pacini@unito.it}


\maketitle

\begin{abstract}
We show that, in toric K\"ahler geometry, the sign of the Ricci curvature corresponds exactly to convexity properties of the volume functional. We also discuss analogous relationships in the more general context of quasi-homogeneous manifolds, and existence results for minimal Lagrangian submanifolds.
\end{abstract}

\let\thefootnote\relax\footnote{MSC 2020: 14M27, 53Cxx, 32Uxx}

\section{Introduction}\label{s:intro}
Within Riemannian geometry it is well-known that Ricci curvature is closely related to the properties of volume. On the infinitesimal level it appears as the second-order coefficient in the Taylor expansion of the volume density function along geodesics; more globally, the Bishop-Gromov theorem shows that it governs the growth of the volume of geodesic balls.

In the K\"ahler context, Ricci curvature also provides a representative for the first Chern class of $M$. The goal of this note is to connect and provide a new perspective on these relationships, in terms of convexity properties of the volume functional. It applies to K\"ahler manifolds with certain symmetries. The simplest example is as follows.
\begin{thm*}
Let $M$ be a toric manifold endowed with a K\"ahler structure such that the $\T^n$-action is Hamiltonian. Consider the volume functional $\Vol$ restricted to the set of non-degenerate $\T^n$-orbits. Then:
\begin{enumerate}
\item $\Ric<0$ iff $\log\Vol$ is strictly convex. This implies $\Vol$ is strictly convex. Such manifolds cannot be compact.
\item $\Ric>0$ iff $-\log\Vol$ is strictly convex. This implies $1/\Vol$ is strictly convex. If $M$ is compact then $\Vol$ has a unique critical point, corresponding to a global maximum. It is a minimal Lagrangian $n$-torus.
\end{enumerate}
\end{thm*}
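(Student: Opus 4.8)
The plan is to reduce everything to convexity of explicit functions on the open dense orbit. On the open dense $(\C^*)^n$-orbit the K\"ahler structure is encoded by a strictly convex K\"ahler potential $\phi(\rho)$, where $\rho=(\rho_1,\dots,\rho_n)\in\R^n$ are the logarithmic coordinates (the real parts of $w_j=\rho_j+i\theta_j$, $z_j=e^{w_j}$). The non-degenerate orbits are the tori $\{\rho=\mathrm{const}\}$, and $\rho$ is the natural affine coordinate on orbit space with respect to which convexity is to be read; this choice --- rather than the Legendre-dual moment coordinates, which induce a different convex structure --- is the first thing I would pin down. The metric is $g_{j\bar k}=\tfrac14\phi_{jk}$ with $\phi_{jk}=\partial^2\phi/\partial\rho_j\partial\rho_k$, so the induced metric on $\mathcal O_\rho$ is $\tfrac14\phi_{jk}\,\d\theta^j\d\theta^k$ and
\[
\Vol(\mathcal O_\rho)=c\,\sqrt{\det H_\phi(\rho)},\qquad H_\phi:=(\phi_{jk}),
\]
hence $\log\Vol=\mathrm{const}+\tfrac12\log\det H_\phi$.

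The key step is to recognize the right-hand side as the Ricci potential. Since $\det(g_{j\bar k})\propto\det H_\phi(\rho)$, the Ricci form equals $-i\partial\bar\partial\log\det H_\phi$; as $F:=\log\det H_\phi$ is a function of $\rho$ alone, the associated Hermitian form is a negative multiple of the Hessian $D^2_\rho F$. Therefore $\Ric$ is positive (resp.\ negative) definite precisely when $D^2_\rho F$, equivalently $D^2_\rho\log\Vol$, is negative (resp.\ positive) definite, i.e.\ when $\log\Vol$ is strictly concave (resp.\ strictly convex). This yields both equivalences at once. The two auxiliary implications are immediate from $D^2 e^{h}=e^{h}\bigl(D^2 h+\nabla h\otimes\nabla h\bigr)$, which is positive definite as soon as $D^2h$ is: taking $h=\log\Vol$ and $h=-\log\Vol$ gives strict convexity of $\Vol$ and of $1/\Vol$ respectively.

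For the global statements I would use the behaviour at the boundary of the moment polytope. When $M$ is compact the coordinates $\rho$ fill all of $\R^n$ while the orbits collapse as $\rho\to\infty$, so $\log\Vol\to-\infty$ in every direction. In case (1) this is incompatible with convexity of $\log\Vol$ on $\R^n$, since a finite convex function cannot tend to $-\infty$ along two opposite rays; hence such $M$ is non-compact. In case (2), $-\log\Vol$ is strictly convex and coercive (it tends to $+\infty$ in all directions), so it attains a unique minimum, which is the unique critical point of $\Vol$ and a global maximum.

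It remains to identify this orbit as a minimal Lagrangian torus. Every orbit is Lagrangian: $\omega$ is a combination of the $\d\rho^j\w\d\theta^k$ and so restricts to zero on $\{\rho=\mathrm{const}\}$. For minimality I would apply the first variation formula: the $\rho$-translation $\rho\mapsto\rho+tV$ is a normal variation with $\T^n$-invariant velocity field $X=\sum_j V^j\partial_{\rho_j}$, so $\partial_{\rho_j}\Vol=-\int_{\mathcal O}\langle\vec H,\partial_{\rho_j}\rangle$. Since the orbit and ambient metric are $\T^n$-invariant the mean curvature vector $\vec H$ is invariant, hence $\langle\vec H,\partial_{\rho_j}\rangle$ is constant along the orbit; vanishing of all these derivatives at the critical orbit then forces $\vec H=0$. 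The steps I expect to be most delicate are the sign bookkeeping in the Ricci computation together with the choice of the correct affine structure, and this last passage from criticality-among-orbits to genuine minimality, which is really an instance of the principle of symmetric criticality and relies on both the invariance of $\vec H$ and the coercivity that guarantees the maximum exists.
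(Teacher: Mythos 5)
Your proof is correct and follows essentially the same route as the paper: your identification $\Ric=-i\partial\bar\partial\log\det H_\phi$ combined with the observation that, for $\T^n$-invariant functions, positivity of $i\partial\bar\partial$ is equivalent to definiteness of the real Hessian in the logarithmic coordinates is exactly the paper's pairing of the Ricci-potential identity for the volume density with Part 3 of Lassalle's theorem, and your boundary-collapse and coercivity arguments reproduce the paper's use of the compactification as a ``boundary condition at infinity''. The only differences are presentational: you work in explicit coordinates and verify symmetric criticality by hand via the first variation formula and the $\T^n$-invariance of the mean curvature, where the paper invokes the $J$-volume formalism (which reduces to $\Vol$ in the Lagrangian/toric case) and cites Hsiang, Palais and Lotay--Pacini for the minimality and criticality statements.
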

This result is a special case of Theorem \ref{thm:volJ} in Section \ref{s:compactifications}. The main new information provided by the theorem is that convexity yields a complete characterization of the sign of the Ricci curvature. Compared to the general relationships mentioned above, and given the already extensive literature on toric geometry, this fact seems rather striking. 

Regarding Part 2, the existence of this minimal Lagrangian orbit already appeared, for example, in \cite{Pacini} while its uniqueness is due to \cite{OSD}. The theorem relates these facts to convexity, thus providing a clear geometric explanation for their validity.

Theorem \ref{thm:volJ} and Corollary \ref{cor:minlag} provide non-Abelian analogues of this result within the more general framework of quasi-homogeneous manifolds. A different type of argument proves the following.

\begin{thm*}
Let $(M,\omega)$ be a compact complex $n$-dimensional manifold, endowed with a positive K\"ahler-Einstein structure. Let $G$ be a compact Lie group acting isometrically on $M$, whose generic orbit is $n$-dimensional and totally real. Then $M$ contains a minimal Lagrangian submanifold.

This submanifold is a $G$-orbit in $M$; it is isolated within the class of minimal Lagrangian $G$-orbits.
\end{thm*}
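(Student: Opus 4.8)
The plan is to realize the desired submanifold as the unique maximum of the volume functional restricted to the family of Lagrangian $G$-orbits, and to run the same convexity mechanism that drives Part~2 of the toric theorem and its non-Abelian analogue, Theorem~\ref{thm:volJ}. First I would reduce to a Hamiltonian picture. Since $(M,\omega)$ is compact K\"ahler-Einstein with positive Einstein constant it is Fano, so $b_1(M)=0$; by Matsushima's theorem the Killing fields generated by $G$ are holomorphic, so $G$ acts by K\"ahler isometries and the action is Hamiltonian, with a moment map $\mu:M\to\mathfrak g^*$. The identity $\omega(\xi_M,\eta_M)=\langle\mu,[\xi,\eta]\rangle$ then shows that a $G$-orbit is isotropic exactly when $\mu$ maps it into the annihilator of $[\mathfrak g,\mathfrak g]$; since the generic orbit is $n$-dimensional, the isotropic $n$-dimensional orbits are precisely the Lagrangian orbits, and they sweep out the level set $Z:=\mu^{-1}(\mathrm{Ann}\,[\mathfrak g,\mathfrak g])$ (for semisimple $G$, $Z=\mu^{-1}(0)$). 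This singles out a concrete family $\mathcal L$ of Lagrangian $G$-orbits to optimize over.

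Next I would record the variational dictionary already used in the toric and group-compactification settings: because $G$ acts by isometries, the mean curvature vector of an orbit is the basic lift of $\nabla\log\Vol$ on the orbit space, and in the K\"ahler-Einstein case the mean curvature form of a Lagrangian is closed; consequently a Lagrangian orbit is minimal iff it is a critical point of $\Vol$ restricted to $\mathcal L$. The core step is then to equip the space of orbits $\mathcal L$ with the affine structure coming from $\mu|_Z$ — the analogue of the moment polytope in the toric case, and of the domain appearing in Theorem~\ref{thm:volJ} — and to prove that $\Ric=\lambda\omega>0$ forces $-\log\Vol$ to be strictly convex there. Granting this, $\log\Vol$ is strictly concave, hence has at most one critical point; compactness of $M$, together with the degeneration of $\Vol$ (i.e.\ $\log\Vol\to-\infty$) as orbits collapse onto the lower-dimensional, non-principal strata bounding $\mathcal L$, forces the maximum to be attained at an interior point. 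That maximizing orbit is the unique critical point of $\Vol|_{\mathcal L}$, hence a minimal Lagrangian $G$-orbit, and strict convexity renders the critical point nondegenerate, which is exactly the asserted isolatedness within the class of minimal Lagrangian $G$-orbits.

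The hard part will be the convexity step in this coordinate-free generality. In the group-compactification setting of Theorem~\ref{thm:volJ} one has explicit symplectic potentials and a genuinely linear structure on the orbit space, so the sign of $\Ric$ reads off directly from the Hessian of $\log\Vol$; here one must first construct the correct affine coordinates on $\mathcal L$ intrinsically from $\mu|_Z$ and the K\"ahler metric, and then show that the Einstein equation $\Ric=\lambda\omega$ translates into strict convexity of $-\log\Vol$ without those explicit coordinates. A secondary difficulty, needed to ensure that the maximum is interior and that the extremal orbit is genuinely $n$-dimensional, is to control the boundary behavior of $\Vol$ on the orbit space and to verify that the co-closedness of the $G$-invariant mean curvature form at the critical point, together with the Lagrangian condition, indeed upgrades Hamiltonian-stationarity to full minimality — a point that is automatic in the toric case, where the Lagrangian orbits are principal, but must be argued when $G$ is non-Abelian and the Lagrangian orbits are special. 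Once these are in place, the statement follows as the natural extension of Corollary~\ref{cor:minlag}.
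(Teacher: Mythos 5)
Your proposal diverges from the paper's argument in a way that opens a genuine gap: you optimize the standard volume over the family $\mathcal L$ of \emph{Lagrangian} $G$-orbits, which presupposes that $\mathcal L$ is nonempty. Nothing in your reduction guarantees this: for non-Abelian $G$ the level set $\mu^{-1}(\mathrm{Ann}[\mathfrak g,\mathfrak g])$ could fail to contain any $n$-dimensional orbit (its isotropic orbits may all be degenerate), and producing even one Lagrangian orbit is precisely the content of the theorem. Indeed the paper points out that this result strengthens an earlier one which \emph{assumed} the existence of at least one Lagrangian $G$-orbit. The paper's proof sidesteps the issue entirely by working with the $J$-volume functional $\Vol_J$ on the space of (generic, hence totally real) $G$-orbits: compactness gives a maximizing orbit, the principle of symmetric criticality (Hsiang/Palais) promotes it to a critical point of $\Vol_J$ among all submanifolds, and the key external input — Proposition 5.3 of \cite{LP1} — says that in the positive K\"ahler--Einstein setting such critical points \emph{are} minimal Lagrangians. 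The Lagrangian condition is an output, not an input. A second problem with your route: a critical point of $\Vol$ restricted to the constrained subfamily $\mathcal L$ is a priori only Hamiltonian-stationary, not minimal; you flag this yourself but do not resolve it, whereas the $\Vol_J$ formulation makes it automatic.

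The convexity mechanism you propose as the ``core step'' is also not available here and is not what the paper uses. Strict convexity of $-\log\Vol_J$ in Theorem \ref{thm:volJ} rests on Lassalle's theorem about PSH functions on a complexified group $G^c$ acting with open dense orbit; in the present theorem there is no group-compactification structure, no affine structure on the orbit space, and the paper makes no convexity claim. Correspondingly, the conclusion is only \emph{isolatedness} among minimal Lagrangian $G$-orbits (obtained by citing \cite{Pacini} Theorem 6 after replacing $G$ by its image in the isometry group), not the uniqueness that convexity yields in Corollary \ref{cor:minlag}. Your opening reduction (Fano, hence $b_1=0$, holomorphic Killing fields, Hamiltonian action) is fine as far as it goes, but the two load-bearing steps — existence of a Lagrangian orbit and convexity on the orbit space — are respectively assumed and unproved, and the second is likely false at this level of generality.
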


The main difference between the toric and the non-Abelian case is that, in the latter, the $G$-orbits are only totally real, not Lagrangian. This implies less control over the volume functional. Results in the non-Abelian case thus require a preliminary understanding of a related ``$J$-volume" functional, specifically designed to handle totally real submanifolds. The two functionals coincide on Lagrangian submanifolds. 

Theorem \ref{thm:volJ}, in particular, highlights the natural role of the $J$-volume by proving that it has the same convexity properties as the standard volume in the Abelian case. It extends previous results in \cite{LP2}, which were limited to negative curvature. 

Minimal Lagrangian submanifolds have been extensively studied in Riemannian/symplectic geometry. Our focus on totally real submanifolds corresponds to the fact that the point of view underlying this paper is, instead, complex-theoretic. This should be compared to the symplectic viewpoint, espoused for example in \cite{abreu}. This shift paves the way for the main ingredients in all proofs: (i) a close relationship between the Ricci potential and the $J$-volume density, and (ii) a classical relationship between pluri-subharmonic functions on complexified Lie groups and the convexity of their integrals. It also provides the appropriate framework for convexity, which does not hold in the standard symplectic coordinates.

\ 

\noindent\textit{Acknowledgements. }We wish to thank S. Diverio and L. Geatti for interesting conversations and J.-P. Demailly for references to the literature. Thanks also to the referees for stimulating comments.

\section{Complexified Lie groups}\label{s:complexified}
Let $\mathfrak{g}$ be a real Lie algebra. Its \textit{complexification} is the space $\mathfrak{g}^c:=\mathfrak{g}\otimes\C$ endowed with the complex-linear extension of the Lie bracket. It is thus, by construction, a complex Lie algebra containing $\mathfrak{g}$ as a real sub-algebra. Furthermore, let $J$ denote the real endomorphism defined by multiplication by $i$. We then obtain a splitting $\mathfrak{g}^c=\mathfrak{g}\oplus J\mathfrak{g}$, where $J\mathfrak{g}$ is $ad(\mathfrak{g})$-invariant: $[X,JY]=J[X,Y]$.

The analogous notion of complexified Lie group is slightly more delicate \cite{Hochschild}.

\begin{definition}Let $G$ be a real Lie group. The complexification of $G$ is a complex Lie group $G^c$ endowed with a continuous homomorphism $\iota:G\rightarrow G^c$ which satisfies the following ``universality property'': given any complex Lie group $K$ and continuous homomorphism $j:G\rightarrow K$, there exists a unique holomorphic homomorphism $j^c:G^c\rightarrow K$ such that $j^c\circ\iota=j$.
\end{definition}
Any $G$ admits a complexification. The definition implies it is unique. When $G$ is compact, $\iota$ is injective and the Lie algebra of $G^c$ is a complexification of the Lie algebra $\mathfrak{g}$ of $G$. We will usually identify $G$ with its image: $G\simeq\iota(G)\leq G^c$. Furthermore, the map 
\begin{equation*}
\mathfrak{g}\times G\rightarrow G^c,\ \ (X,g)\mapsto \exp(iX)g
\end{equation*}
is a diffeomorphism, so $G$ and $G^c$ have the same homotopy/homology groups. We remark that this map also provides a foliation of $G^c$, transverse to $G$; the alternative distribution defined by $J\mathfrak{g}$ is instead clearly non-integrable. 

\ 

\noindent\textit{Example.} The complexification of $G:=\U(1)$ is $\C^*\simeq\C/2\pi i\Z$ (though $G$ embeds for example also into the complex group $\C/<1,2\pi i\Z>$). The complexification of $\U(1)^n$ is $(\C^*)^n$. The complexification of $\SU(n)$ is $\SL(n,\C)$.

\ 

\noindent\textit{Notation. }Generic points (elements) in $G^c$ will be denoted $p$ or $k$; generic vectors in its Lie algebra will be denoted $Z$. We will reserve $g$ for elements in $G$ and $X,Y$ for vectors in $\mathfrak{g}$. Left/right multiplication maps will be denoted $L$/$R$. The identity element will be denoted $e$. Given a vector $Z$ in the Lie algebra, thus $Z=d/dt\,k_{t|t=0}$, $\tilde{Z}$ will denote the fundamental vector field defined by right multiplication: $\tilde{Z}_{|p}:=d/dt (pk_t)_{|t=0}$. The map $Z\mapsto\tilde Z$ is $L$-invariant, ie $L_{k*}\tilde Z=\tilde Z$. Furthermore, 
\begin{equation*}
(R_{k_*}\tilde{Z})_{|p}=R_{k_*|pk^{-1}}\tilde{Z}_{|pk^{-1}}=d/dt(pk^{-1}k_tk)=\widetilde{ad_{k^{-1}}Z}_{|p},
\end{equation*}
so the map is equivariant with respect to these right group actions.

More generally, we will use the notation $Z\mapsto \tilde Z$ for any right group action on a manifold.

\ 

\noindent\textit{Conventions. }As made clear above, we shall work with right group actions on manifolds. Expressing these in the form $M\times K\rightarrow M$ serves to emphasize the anti-homomorphism property. This choice is compatible both with the standard convention used for principal bundles and with our main reference \cite{Lassalle}, who works with the principal bundle $G^c/G$. It implies however that manifolds with a $K$-transitive action with isotropy subgroup $H$ must be written in the form $H\backslash K$, rather than the more common form $K/H$.

\paragraph{Analytic properties.} Let $G$ be a compact Lie group and $G^c$ its complexification. Matsushima \cite{Matsushima} proved that, as a complex manifold, $G^c$ is a Stein space. We can thus apply the following facts, valid for any Stein space $M$:
\begin{itemize}
\item The higher Dolbeault cohomology groups vanish: $H^{p,q}(M)=0$ for all $q\geq 1$. 
\item Let $H^{p,q}_{BC}(M):=Ker(d)/Im(\partial\bar\partial)$ denote the Bott-Chern cohomology spaces. Then the following version of the $\partial\bar\partial$-lemma holds \cite{Aeppli}: the map $H^{p,q}_{BC}(M)\rightarrow H^{p+q}(M;\C)$ is an isomorphism, thus injective. 
\end{itemize}
This has the following consequence.
\begin{lem}\label{l:ddbar}
Let $\omega$ be a real (1,1) form on $G^c$. Then $\omega$ is $d$-exact iff $\omega=i\partial\bar\partial f$, for some $f:G^c\rightarrow \R$.
\end{lem}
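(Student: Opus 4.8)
The plan is to prove the two implications separately, with essentially all the content concentrated in the ``only if'' direction, which should follow almost immediately from the two Stein-space facts recorded above.

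For the easy direction, suppose $\omega = i\partial\bar\partial f$ with $f$ real. I would simply exhibit a real primitive: setting $\alpha := \tfrac{i}{2}(\bar\partial f - \partial f)$, which is real since $\partial f = \overline{\bar\partial f}$, a one-line computation using $d = \partial + \bar\partial$ and $\partial^2 = \bar\partial^2 = 0$ gives $d\alpha = \tfrac{i}{2}(\partial\bar\partial f - \bar\partial\partial f) = i\partial\bar\partial f = \omega$. Hence $\omega$ is $d$-exact. (One also checks by conjugation that $i\partial\bar\partial f$ is automatically real when $f$ is, so the statement is consistent.)

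For the converse, suppose $\omega$ is a real, $d$-exact $(1,1)$-form. Being exact it is in particular $d$-closed, so it defines a class in the Bott-Chern group $H^{1,1}_{BC}(G^c)$. Under the natural map $H^{1,1}_{BC}(G^c) \to H^2(G^c;\C)$ this class is sent to the de Rham class $[\omega]$, which vanishes by hypothesis. Since $G^c$ is Stein, that map is injective ($\partial\bar\partial$-lemma), so the Bott-Chern class of $\omega$ is already zero; by the definition of $\mathrm{Im}(\partial\bar\partial)$ at bidegree $(1,1)$ this means $\omega = \partial\bar\partial g$ for some smooth, a priori complex-valued, function $g$. It then remains to arrange that the potential be real and to absorb the factor $i$: conjugating $\omega = \partial\bar\partial g$ and using $\overline{\partial\bar\partial g} = -\partial\bar\partial\bar g$ together with reality of $\omega$ gives $\omega = -\partial\bar\partial\bar g$, so averaging yields $\omega = \tfrac12\partial\bar\partial(g - \bar g) = i\,\partial\bar\partial f$ with $f := \mathrm{Im}\,g$ real.

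I do not expect a serious obstacle here: the heavy lifting is done by Matsushima's theorem and the Aeppli/Bott-Chern isomorphism quoted above, and the only genuine work is the bookkeeping with bidegrees and complex conjugation. As an alternative to invoking Bott-Chern, one could argue purely within Dolbeault theory: decomposing a primitive $\alpha = \alpha^{1,0} + \alpha^{0,1}$ of $\omega = d\alpha$ into types forces $\partial\alpha^{1,0} = 0$ and $\bar\partial\alpha^{0,1} = 0$, and the vanishing $H^{0,1}(G^c) = 0$ together with its conjugate $H^{1,0}_\partial(G^c) = 0$ lets one write $\alpha^{0,1} = \bar\partial u$ and $\alpha^{1,0} = \partial v$, whence $\omega = \partial\bar\partial(u - v)$; the same reality trick then finishes the argument.
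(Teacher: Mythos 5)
Your argument is correct and follows exactly the route the paper intends: the lemma is stated as a direct consequence of the Aeppli/Bott--Chern injectivity $H^{1,1}_{BC}(G^c)\hookrightarrow H^2(G^c;\C)$ on the Stein manifold $G^c$, and your conjugation-averaging step to extract a real potential, together with the explicit real primitive $\tfrac{i}{2}(\bar\partial f-\partial f)$ for the easy direction, fills in precisely the bookkeeping the paper leaves implicit. Your alternative Dolbeault argument via $H^{0,1}(G^c)=0$ is also valid, but the main proof matches the paper's approach.
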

\noindent\textit{Example. }Any compact semi-simple Lie group $G$ has $H^2(G;\R)=0$. It follows that $H^2(G^c;\R)=0$, so any closed real (1,1) form $\omega$ on $G^c$ is $d$-exact.

Alternatively, assume $\omega\equiv 0$ along one $G$-orbit $pG$ in $G^c$. Then $\int_\Sigma\omega=0$ for all surfaces $\Sigma\subseteq pG$, so $[\omega]=0\in H^2(pG;\R)\simeq H^2(G^c;\R)$.

\ 

We can view $G^c$ as a principal fibre bundle with respect to the action $R_G$ of $G$ defined by right multiplication. Since $G\leq G^c$ is closed, the projection $\pi:G^c\rightarrow G^c/G$ defines a manifold structure on $G^c/G$. The decomposition of the Lie algebra then shows that $G^c/G$ is a reductive homogeneous space, endowed with a natural connection. The geodesics through the point $[e]\in G^c/G$ are the projection of the real 1-parameter subgroups in $G^c$ generated by vectors $JX\in J\mathfrak{g}$. More generally, geodesics through the point $[k]$ are the projection of the curves through $k$ obtained by left-translation of the above 1-parameter subgroups. We can thus define a notion of convexity on $G^c/G$: a function $F:G^c/G\rightarrow\R$ is \textit{convex} iff it is convex along each such curve.

Lassalle \cite{Lassalle} proved the existence of an interesting relationship between pluri-subharmonic functions $f:G^c\rightarrow \R$ and convex functions $F:G^c/G\rightarrow\R$.

\begin{thm}\label{t:Lassalle}
Let $G$ be a compact Lie group and $G^c$ its complexification. Let $f$ be a real-valued function on $G^c$.
\begin{enumerate}
\item Assume $f$ is $G$-invariant, ie $f=\pi^*F$, for some $F$ on $G^c/G$. Then $f$ PSH implies $F$ convex. 
\item More generally, given $f$ on $G^c$, set $F([k]):=\int_Gf(kg)d\mu$, where $\mu$ is the Haar measure on $G$. Then $f$ PSH implies $F$ convex.
\item When $G=\U(1)^n$ and $f$ is $G$-invariant, then $f$ is PSH iff $F$ is convex.
\end{enumerate}
\end{thm}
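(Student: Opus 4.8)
The plan is to prove the three parts in order, reducing (2) to (1) by an averaging trick and establishing (3) by a direct computation special to the abelian case. Throughout I use the description from the excerpt: the geodesics of $G^c/G$ through $[k]$ are the curves $t\mapsto[k\exp(tJX)]$ with $X\in\mathfrak{g}$, so convexity of $F$ means convexity of $t\mapsto F([k\exp(tJX)])$ for every $k\in G^c$ and $X\in\mathfrak{g}$.

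For Part 1, the idea is to manufacture from each geodesic a genuine holomorphic copy of $\C$ inside $G^c$. Fix $k$ and $X$ and set $\phi:\C\to G^c$, $\phi(z):=k\exp(zX)$, where $zX$ is computed in $\mathfrak{g}^c$. Since $z\mapsto zX$ is complex-linear, $\exp$ is holomorphic, and $L_k$ is biholomorphic, $\phi$ is holomorphic. Writing $z=s+it$ and using $i\leftrightarrow J$ gives $zX=sX+tJX$, where $sX$ and $tJX$ commute (both lie in $\C X\subseteq\mathfrak{g}^c$), so $\phi(z)=k\exp(tJX)\exp(sX)$ with $\exp(sX)\in G$. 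Because $f$ is $G$-invariant, $f\circ\phi$ depends only on $t=\Imm z$; explicitly $f(\phi(s+it))=F([k\exp(tJX)])$. Now $f\circ\phi$ is subharmonic on $\C$, since PSH functions pull back to subharmonic functions under holomorphic maps, and a subharmonic function independent of one of the two real coordinates is convex in the other (its Laplacian reduces to the one-variable second derivative). Hence $t\mapsto F([k\exp(tJX)])$ is convex, and letting $k,X$ vary shows $F$ is convex.

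For Part 2, I would reduce to Part 1 by averaging over the orbit. Set $\tilde f(k):=\int_G f(kg)\,d\mu(g)=\int_G (f\circ R_g)(k)\,d\mu(g)$, so that $F([k])=\tilde f(k)$. Bi-invariance of $\mu$ shows $\tilde f$ is right $G$-invariant, hence descends to $F$ on $G^c/G$. Each right multiplication $R_g$, for $g\in G\leq G^c$, is a biholomorphism of $G^c$, so each $f\circ R_g$ is PSH; since PSH is preserved under integration against a positive measure (granting the mild regularity making the integral finite and upper semicontinuous), $\tilde f$ is PSH. Applying Part 1 to the $G$-invariant PSH function $\tilde f$ yields that $F$ is convex.

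For Part 3 the forward implication is Part 1, so only the converse remains, and here I would compute explicitly. With $G=\U(1)^n$ we have $G^c=(\C^*)^n$, and a $G$-invariant function depends only on $\xi_j:=\log|w_j|$. In logarithmic coordinates $\zeta_j=\xi_j+i\theta_j$, with $\xi_j=\Ree\zeta_j$, a short calculation gives $i\partial\bar\partial f=\tfrac{i}{4}\sum_{j,k}\frac{\partial^2 f}{\partial\xi_j\partial\xi_k}\,d\zeta_j\wedge d\bar\zeta_k$, so the complex Hessian is a positive multiple of the real Hessian of $f$ in the $\xi$-variables. Since the geodesics of $G^c/G\cong\R^n_\xi$ are the affine lines, convexity of $F$ is exactly positive semidefiniteness of this real Hessian, which is exactly the PSH condition; thus $f$ is PSH iff $F$ is convex. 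The main obstacle I expect is the packaging in Part 1 — arranging the holomorphic disc, the commutation $\exp(sX+tJX)=\exp(tJX)\exp(sX)$, and the $G$-invariance so that $f\circ\phi$ collapses to a function of $\Imm z$ alone, which is precisely what degenerates subharmonicity into one-variable convexity. The abelian rigidity behind Part 3 (the coincidence of complex and real Hessians in a global logarithmic chart) is exactly what fails in the non-abelian setting, explaining why only one implication survives there.
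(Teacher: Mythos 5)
The paper does not prove this statement: it is quoted as a theorem of Lassalle, with only a citation, so there is no in-paper argument to compare yours against. Judged on its own, your proof is correct and is essentially the classical argument. Part 1 is handled by the right device: the map $\phi(z)=k\exp(zX)$ is holomorphic, the commutation $\exp(sX+tJX)=\exp(tJX)\exp(sX)$ holds because $\C X$ is an abelian subalgebra, and right $G$-invariance of $f$ collapses $f\circ\phi$ to a function of $\Imm z$, whose subharmonicity is then one-variable convexity along exactly the curves the paper declares to be geodesics of $G^c/G$. Part 2 correctly reduces to Part 1 via the bi-invariance of Haar measure and the fact that each $R_g$, $g\in G$, is a biholomorphism, so the average is again PSH. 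Part 3's logarithmic-coordinate computation identifying the complex Hessian of a $\U(1)^n$-invariant function with $\tfrac14$ of its real Hessian in $\xi=\log|w|$ is the standard equivalence and matches the paper's identification of geodesics with affine lines in $\R^n$. The only caveat worth recording is regularity: PSH functions are merely upper semicontinuous, so statements like ``its Laplacian reduces to the one-variable second derivative'' should be read distributionally (a subharmonic function on $\C$ independent of one real coordinate is convex in the other --- true, but by a limiting/mean-value argument rather than a pointwise Hessian), and the averaging in Part 2 needs the usual measurability and upper-semicontinuity bookkeeping; for the smooth $f$ arising in the paper's applications (volume densities of K\"ahler metrics) none of this is an issue.
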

\paragraph{Invariant metrics.}
Recall that, given a symplectic manifold $(M,\omega)$, a right action $\varphi$ of $G$ on $M$ is \textit{Hamiltonian} if there exists a \textit{moment map} $\mu:M\rightarrow \mathfrak{g}^*$ such that (i) $d\mu\cdot X=\iota_{\tilde X}\omega$, (ii) $\mu$ is $G$-equivariant with respect to the (right) coadjoint action; equivalently, $\mu_{|\varphi_g(p)}\cdot X=\mu_{|p}\cdot(ad_gX)$. 

In particular, (i) implies that $\mathcal{L}_{\tilde X}\omega\equiv 0$ so $\omega$ is $G$-invariant, while (ii) implies that $\omega(\tilde X,\tilde Y)=\mu\cdot [Y,X]$. It follows that the $G$-orbits are $\omega$-isotropic iff $\mu\cdot[\mathfrak{g},\mathfrak{g}]=0$.  Conditions (i), (ii) also imply that a moment map is unique up to a constant $\alpha\in\mathfrak{g}^*$ such that $\alpha\cdot[\mathfrak{g},\mathfrak{g}]=0$.

When $M$ is compact, the fact that $\omega$ is non-degenerate implies that $\omega$ cannot be exact. We are however interested in the manifold $G^c$, which is non-compact. It has two natural right $G$-actions, defined by $R_g$ and $L_{g^{-1}}$. To simplify, we shall restrict our attention to the former. 

Recall also that, on functions, we can write $i\partial\bar\partial=dd^c$, where $d^cf=-df\circ J$.

\begin{prop}\label{p:Hamiltonian}
Let $\omega$ be a K\"ahler form on $G^c$.
\begin{enumerate}
\item Assume $\omega$ is $R_G$-invariant and $d$-exact. Then the $R_G$-action is Hamiltonian. 
\item In general, assume the $R_G$-action is Hamiltonian. Then any moment map $\mu: G^c\rightarrow \mathfrak{g}^*$ is a submersion.

Let $\mathcal{L}\subseteq G^c$ denote the set of points belonging to Lagrangian $R_G$-orbits. Then  $\mathcal{L}$ is either empty or it is a smooth submanifold of $G^c$ of dimension $dim(G^c)-dim([\mathfrak{g},\mathfrak{g}])$. 
 
In particular, all $R_G$-orbits are Lagrangian (ie $\mathcal{L}=G^c$) iff $G$ is Abelian. In this case $\omega$ is $d$-exact (equivalently, $dd^c$-exact).
\end{enumerate}
\end{prop}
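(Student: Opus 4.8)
The plan is to handle the two parts in turn, building everything on the moment map, the non-degeneracy of $\omega$, and the freeness of $R_G$.

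For Part 1, I would first produce a $G$-invariant primitive of $\omega$. Writing $\omega=d\alpha_0$ and averaging over the Haar measure, $\alpha:=\int_G R_g^*\alpha_0\,d\mu$ is still a primitive (because $\omega$ is $R_G$-invariant, $d\alpha=\int_G R_g^*\omega\,d\mu=\omega$) and is now $R_G$-invariant. Cartan's formula then gives $0=\mathcal{L}_{\tilde X}\alpha=d(\iota_{\tilde X}\alpha)+\iota_{\tilde X}\omega$, so $\iota_{\tilde X}\omega=-d(\iota_{\tilde X}\alpha)$. I would therefore set $\mu\cdot X:=-\iota_{\tilde X}\alpha$, which is linear in $X$ and satisfies condition (i) by construction. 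For the equivariance condition (ii), I would use the identity $R_{g_*}\tilde X=\widetilde{ad_{g^{-1}}X}$ from the Notation paragraph, rewritten as $\tilde X_{|pg}=R_{g_*}(\widetilde{ad_gX}_{|p})$, together with $R_g^*\alpha=\alpha$; this gives $\mu_{|pg}\cdot X=-\alpha_{|p}(\widetilde{ad_gX}_{|p})=\mu_{|p}\cdot(ad_gX)$, which is exactly (ii). Hence the action is Hamiltonian.

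For Part 2, the submersion claim follows from pairing: condition (i) reads $\langle d\mu_{|p}(v),X\rangle=\omega(\tilde X_{|p},v)$, so $d\mu_{|p}$ fails to be surjective exactly when some nonzero $X$ has $\omega(\tilde X_{|p},\cdot)\equiv 0$. Since $\omega$ is non-degenerate this would force $\tilde X_{|p}=0$; but $R_G$ is free, and indeed $\tilde X_{|p}=L_{p_*}X_e=0$ iff $X=0$, so no such $X$ exists and $\mu$ is a submersion everywhere. Next, since the action is free every orbit has real dimension $\dim G=\tfrac12\dim_\R G^c$, so an orbit is Lagrangian iff it is isotropic. Using the formula $\omega(\tilde X,\tilde Y)=\mu\cdot[Y,X]$ quoted in the text, isotropy at $p$ is equivalent to $\mu_{|p}\in\mathrm{Ann}([\mathfrak{g},\mathfrak{g}])\subseteq\mathfrak{g}^*$; hence $\mathcal{L}=\mu^{-1}\big(\mathrm{Ann}([\mathfrak{g},\mathfrak{g}])\big)$. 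As $\mu$ is a submersion it is transverse to this linear subspace, so $\mathcal{L}$ is empty or a submanifold of codimension $\dim[\mathfrak{g},\mathfrak{g}]$, i.e. of dimension $\dim(G^c)-\dim[\mathfrak{g},\mathfrak{g}]$.

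The equivalence ``$\mathcal{L}=G^c$ iff $G$ Abelian'' then drops out: if $\mathfrak{g}$ is Abelian then $[\mathfrak{g},\mathfrak{g}]=0$ and $\mathrm{Ann}([\mathfrak{g},\mathfrak{g}])=\mathfrak{g}^*$, giving $\mathcal{L}=G^c$; conversely $\mathcal{L}=G^c$ forces $\mu(G^c)\subseteq\mathrm{Ann}([\mathfrak{g},\mathfrak{g}])$, but the image of a submersion is open and an open set cannot lie in a proper subspace, so $[\mathfrak{g},\mathfrak{g}]=0$. For the final $d$-exactness in the Abelian case I would argue cohomologically, exactly as in the Example following Lemma \ref{l:ddbar}: each orbit $pG$ is now Lagrangian, so $\omega$ pulls back to zero on it and $[\omega]=0$ in $H^2(pG;\R)$; since $pG\cong G$ is a deformation retract of $G^c$ (via the diffeomorphism $(X,g)\mapsto\exp(iX)g$, translated by $L_p$), the restriction $H^2(G^c;\R)\to H^2(pG;\R)$ is an isomorphism, whence $[\omega]=0$ in $H^2(G^c;\R)$. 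Thus $\omega$ is $d$-exact, and Lemma \ref{l:ddbar} upgrades this to $\omega=i\partial\bar\partial f=dd^c f$.

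I expect the two genuinely substantive points to be the bookkeeping of conventions in the equivariance step of Part 1 (getting $ad_g$ versus $ad_{g^{-1}}$ correct), and the topological input in the last step: although a $G$-orbit is in general only totally real, here it is Lagrangian \emph{and} homotopy-equivalent to all of $G^c$, which is precisely what converts the pointwise vanishing of $\omega$ on orbits into global $d$-exactness. Everything else is a direct consequence of non-degeneracy of $\omega$, freeness of $R_G$, and the transversality/constant-rank theorem.
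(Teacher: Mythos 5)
Your proof is correct, and Part 2 together with the final exactness step coincides with the paper's argument: the same non-degeneracy argument for the submersion property, the same identification $\mathcal{L}=\mu^{-1}\bigl(\mathrm{Ann}([\mathfrak{g},\mathfrak{g}])\bigr)$ with the transversality/codimension count, the same open-image argument for the Abelian characterization, and the same cohomological argument (orbit Lagrangian $\Rightarrow$ $[\omega]=0$ in $H^2(pG;\R)\simeq H^2(G^c;\R)$) that the paper delegates to the Example following Lemma \ref{l:ddbar}. The one genuine divergence is in Part 1. The paper invokes Lemma \ref{l:ddbar} (hence the Stein property of $G^c$ and the $\partial\bar\partial$-lemma) to write $\omega=dd^cf$ with $f$ an $R_G$-invariant potential, and takes $\mu\cdot X:=-d^cf(\tilde X)$; you instead average an arbitrary primitive $\alpha_0$ to get an $R_G$-invariant $1$-form $\alpha$ with $d\alpha=\omega$ and set $\mu\cdot X:=-\iota_{\tilde X}\alpha$. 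Your route is more elementary and more general -- it uses only the hypothesis of $d$-exactness, not the complex-analytic input -- and your verification of equivariance via $\tilde X_{|pg}=R_{g*}\bigl(\widetilde{ad_gX}_{|p}\bigr)$ is the correct bookkeeping. What the paper's specific choice $\alpha=d^cf$ buys is not needed for this Proposition but is exploited elsewhere: that particular moment map automatically annihilates $[\mathfrak{g},\mathfrak{g}]$, which is the normalization relevant to the Lagrangian-orbit discussion, whereas a generic averaged primitive only gives you \emph{some} moment map, unique up to a constant in $\mathrm{Ann}([\mathfrak{g},\mathfrak{g}])$. Since Part 2 is stated for an arbitrary moment map, this costs you nothing here.
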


\begin{proof}
Regarding Part 1, Lemma \ref{l:ddbar} shows that we can write $\omega=dd^c f$. By symmetrization, we can assume $f$ is $R_G$-invariant. Let us define
\begin{equation*}
\mu:G^c\rightarrow\mathfrak{g}^*,\ \ \mu\cdot X:=-d^cf(\tilde X).
\end{equation*}
This map satisfies condition (i). Indeed, given any $Z\in T_pG^c$, let us choose an extension $\bar Z$ to $G^c$. The $R_G$-invariance of $d^cf$ shows that
\begin{align*}
(d\mu\cdot X)(Z)&=d(\mu\cdot X)(Z)=-\bar Z(d^cf(\tilde{X}))\\
&=d(d^cf)(\tilde{X},\bar Z)-\tilde{X}(d^cf(\bar Z))+d^cf([\tilde{X},\bar Z])\\
&=d(d^cf)(\tilde{X},\bar Z)-d^cf(\mathcal{L}_{\tilde{X}}\bar Z-[\tilde{X},\bar Z])\\
&=d(d^cf)(\tilde{X},Z),
\end{align*}
providing an equality between two, a priori different, uses of the operator $d$: respectively, on maps and on 1-forms. We can now conclude that $(d\mu\cdot X)(Z)=\omega(\tilde X,Z)$.
Alternatively, this follows from the calculation
\begin{equation*}
0=\mathcal{L}_{\tilde X}(d^c f)=\iota_{\tilde X}dd^cf+d\iota_{\tilde X}d^c f=\iota_{\tilde X}\omega+d\iota_{\tilde X}d^c f.
\end{equation*}
The map $\mu$ also satisfies condition (ii) because, using the $L$-invariance of $\tilde X$ and the $R_G$-invariance of $d^cf$, 
\begin{align*}
-d^cf_{|p}(ad_g\tilde X)&=-(L_g^*R_{g^{-1}}^*(d^cf))_{|g^{-1}pg}(\tilde{X})=-(L_g^*(d^cf))_{|g^{-1}pg}(\tilde{X})\\
&=-d^cf_{|pg}(\tilde X).
\end{align*}

Part 2 relies only on the abstract properties of the moment map and the effectivity of the $R_G$-action. Indeed, assume $\mu$ is not a submersion at some point $p\in G^c$. This implies that there exists $X\neq 0$, thus $\tilde X\neq 0$, such that $d\mu\cdot X=0$. Then $\omega(\tilde X,Z)=(d\mu\cdot X)(Z)=0$ for all $Z$, contradicting the non-degeneracy of $\omega$.

The set $\mathcal{L}$ can be characterized as $\mu^{-1}([\mathfrak{g},\mathfrak{g}]^\#)$, proving both its regularity and the fact that if $G$ is Abelian then all orbits are Lagrangian.

Conversely, assume all $R_G$-orbits are Lagrangian. Then, as seen above, $\mu\equiv 0$ on $[\mathfrak{g},\mathfrak{g}]$. Since $\mu$ is a submersion, it follows that $[\mathfrak{g},\mathfrak{g}]=0$ so $G$ is Abelian. The fact that $\omega$ is $d$-exact has already been discussed.
\end{proof}

Proposition \ref{p:Hamiltonian} explains why we are interested in Hamiltonian actions: they provide strong control over the Lagrangian condition for the orbits.

\ 

Consider the distribution in $TG^c$ defined by $J\mathfrak{g}$, endowed with the metric obtained from $\omega$.  The $R_G$-invariance of $\omega$ is equivalent to the fact that the metric descends to $G^c/G$ or, since the $R$-action on the tangent bundle corresponds to the $ad$-action on the Lie algebra, that it defines a $G^c/G$-family of $ad_G$-invariant metrics on $J\mathfrak{g}$. 

\section{Convexity of volume functionals}\label{s:convexity}

We are interested in studying the properties of the volume functional on the set of $R_G$-orbits in $G^c$. Our strategy will be to start with a notion of volume which detects special properties of these submanifolds, then relate it to the usual notion. Specifically, we shall use the fact that our $R_G$-orbits are orientable and, thanks to the splitting $\mathfrak{g}^c=\mathfrak{g}\oplus J\mathfrak{g}$, totally real.

\begin{definition} 
A $n$-submanifold $L$ of a complex $n$-manifold $M$ is \textit{totally real} (TR) if there exists a splitting $TM_{|L}=TL\oplus J(TL)$, ie any real basis of $T_pL$ is a complex basis of $T_pM$.
\end{definition}

Now let $L$ be orientable TR and $M$ K\"ahler. Consider the standard volume form induced on $L$ by the Riemannian metric $g$: given any volume form $\sigma$, we can write it as $\sigma/|\sigma|_g$. Integration leads to the standard volume functional $\Vol$. This construction does not use, in any way, the TR property. 

An alternative volume form can be obtained by first extending $\sigma$, in a complex-linear fashion, to an element of the canonical bundle $K_{M|L}$, then normalizing via the Hermitian metric $h$. We thus obtain a (n,0)-form $\Omega:=\sigma/|\sigma|_h$ which, as before, is independent of the choice of $\sigma$. The restriction $\vol_J:=\Omega_{|TL}$ is a real volume form. Integration leads to an alternative \textit{$J$-volume functional} which we denote $\Vol_J$.

In terms of a local basis $v_1,\dots,v_n$ of $TL$ one finds the formula 
\begin{equation*}
\vol_J= \frac{v_1^*\wedge\dots\wedge v_n^*}{(\det h(v_i^*,v_j^*))^{1/2}}=\sqrt{\det h(v_i,v_j)}\,v_1^*\wedge\dots\wedge v_n^*,
\end{equation*} 
where $v_i^*$ denotes both an element in the dual real basis on $T^*L$ (so as to get a real volume form) and its complex-linear extension to $\Lambda^{1,0}M_{|TL}$ (so as to apply $h$).

We make two observations:
\begin{enumerate}
\item When $v_i$ are holomorphic, the function $H:=\det h(v_i,v_j)$ determines both the \textit{$J$-volume density} $\sqrt{H}$ and the curvature of the Chern connection on $(K_M,h)$: $\rho=i\partial\bar\partial (-\log H)$. In our K\"ahler setting, this coincides with the standard Ricci 2-form $\Ric(J\cdot,\cdot)$.
\item $\vol_J\leq\vol$. The equality holds precisely on Lagrangian submanifolds, \cite{LP1} Section 4.1; the equality $\Vol_J=\Vol$ then holds to first order.
\end{enumerate}
The setting we are interested in is when $M:=G^c$ and $L$ is a $R_G$-orbit. We can then choose $v_i:=\tilde{X}_i$, where $X_i$ is a basis of $\mathfrak{g}$. These vectors are holomorphic on $G^c$ and tangent to the $R_G$-orbits. 
The first of the above facts then allows us to control the analytic properties of the $J$-volume density function $\sqrt{H}$ in terms of the Ricci curvature of $G^c$. Specifically: $\log \sqrt{H}=1/2(\log H)$ is PSH iff $\rho\leq 0$. It then follows that $\sqrt{H}$ is PSH.

\begin{prop}\label{prop:volJ_convexity}
Let $G$ be a compact Lie group and $G^c$ its complexification. Let $\omega$ be a K\"ahler form on $G^c$. Consider the $J$-volume functional $\Vol_J:G^c/G\rightarrow\R$ on the space of $R_G$-orbits. If $\Ric\leq 0$ then $\Vol_J$ is convex.

Now assume $\omega$ is $R_G$-invariant. Then $H$ is $R_G$-invariant, so:
\begin{itemize}
\item If $\Ric\leq 0$ then $\log\Vol_J$ is convex. It follows that $\Vol_J$ is convex.
\item If $\Ric\geq 0$ then $\log\Vol_J$ is concave. It follows that $1/\Vol_J$ is convex.
\end{itemize}
In particular, if $\Ric=0$ then $\log\Vol_J$ is linear along each geodesic in $G^c/G$, so $\Vol_J$ is exponential.
\end{prop}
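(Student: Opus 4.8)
The plan is to reduce everything to Lassalle's Theorem \ref{t:Lassalle}, using the Ricci–plurisubharmonicity dictionary recorded just above the statement. First I would make the orbit integral explicit. Parametrize the $R_G$-orbit through $p$ by $\Phi_p:G\to pG$, $g\mapsto pg$. Since the $\tilde X_i$ are the fundamental vector fields of the $R_G$-action and are $L$-invariant, the differential $d\Phi_p$ carries the left-invariant frame $\{X_i\}$ on $G$ to the frame $\{\tilde X_i\}$ along the orbit; consequently the formula $\vol_J=\sqrt{H}\,\tilde X_1^*\wedge\dots\wedge \tilde X_n^*$ pulls back to $\sqrt{H}(pg)$ times the left-invariant coframe $X_1^*\wedge\dots\wedge X_n^*$, which is the bi-invariant Haar measure $d\mu$ of the compact group $G$. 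Hence
\[
\Vol_J([p])=\int_{pG}\vol_J=\int_G\sqrt{H}(pg)\,d\mu(g),
\]
which is precisely the averaged function $F$ appearing in Part 2 of Theorem \ref{t:Lassalle} with $f=\sqrt{H}$.

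Second, I would invoke the discussion preceding the statement: $\tfrac12\log H$ is PSH iff $\rho=i\partial\bar\partial(-\log H)\le 0$, i.e. iff $\Ric\le 0$, and in that case $\sqrt{H}=e^{\frac12\log H}$ is PSH as well, being the composition of a PSH function with the convex increasing exponential. Thus when $\Ric\le 0$ the integrand $f=\sqrt H$ is PSH, and Part 2 of Theorem \ref{t:Lassalle} immediately yields that $\Vol_J$ is convex on $G^c/G$. This settles the first assertion, with no invariance hypothesis needed.

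Third, under the extra assumption that $\omega$, hence $H$, is $R_G$-invariant, the integrand no longer depends on $g$, so $\Vol_J([p])=\mu(G)\,\sqrt{H}(p)$, and $\log\Vol_J=\tfrac12\log H+\mathrm{const}$ descends from a $G$-invariant function on $G^c$. I would then apply Part 1 of Theorem \ref{t:Lassalle} to the $G$-invariant PSH function $\pm\tfrac12\log H$: if $\Ric\le 0$ then $\tfrac12\log H$ is PSH, so $\log\Vol_J$ is convex and $\Vol_J=e^{\log\Vol_J}$ is convex; if $\Ric\ge 0$ then $-\tfrac12\log H$ is PSH, so $\log\Vol_J$ is concave and $1/\Vol_J=e^{-\log\Vol_J}$ is convex. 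When $\Ric=0$ both signs apply, so $\log\Vol_J$ is simultaneously convex and concave along every geodesic of $G^c/G$, hence affine, which is the claimed exponential behaviour of $\Vol_J$.

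The routine convexity algebra (exponential of convex is convex, reciprocal via $e^{-\log}$, affinity from two-sided convexity) is harmless. The step requiring \emph{genuine care} is the first one: verifying that the geometric $J$-volume integral over each orbit really coincides with the Lassalle average. Concretely, I would check that $H=\det h(\tilde X_i,\tilde X_j)$ is smooth and strictly positive — which holds because the orbit is totally real, so the $\tilde X_i$ form a complex basis and the Hermitian Gram matrix is positive definite — that the fundamental coframe restricts to the bi-invariant measure on each compact orbit with a normalization independent of $p$, and that orientations are handled consistently so no stray sign spoils the identity. Once this identification is secured, the proposition follows formally from Theorem \ref{t:Lassalle} together with the Ricci–PSH correspondence.
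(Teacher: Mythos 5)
Your argument is correct and follows essentially the same route as the paper: identify $\Vol_J([p])$ with the Lassalle average $\int_G\sqrt H(pg)\,d\mu$, use the Ricci--PSH dictionary for $\tfrac12\log H$, and apply Parts 1 and 2 of Theorem \ref{t:Lassalle} (plus the standard convexity algebra and the two-sided argument for $\Ric=0$). The only step you assert rather than prove is ``$\omega$ $R_G$-invariant hence $H$ $R_G$-invariant'' --- since the frame $\tilde X_i$ transforms by $ad_{g^{-1}}$ under $R_g$ this needs the unimodularity of $ad_G$ for compact $G$ (or the paper's observation that $H=\det h(Jv_i,Jv_j)$ with $h$ $ad_G$-invariant on $J\mathfrak g$), a one-line point but one that the proposition explicitly claims.
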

\begin{proof}
The integral of $v_1^*\wedge\dots\wedge v_n^*$ along an $R_G$-orbit does not depend on the chioce of orbit. Up to constants, this form coincides with the Haar measure. The $J$-volume of an $R_G$-orbit is the integral of the PSH function $\sqrt{H}$, so the first statement is an application of Theorem \ref{t:Lassalle}. 

Notice that $h(v_i,v_j)=h(Jv_i,Jv_j)$. The vectors $Jv_i$ belong to the horizontal distribution defined by $J\mathfrak{g}$. If $\omega$ is $R_G$-invariant then $h$ is $R_G$-invariant on the distribution and $ad_G$-invariant on $J\mathfrak{g}$, so $H$ is also $R_G$-invariant. In this case
\begin{equation*}
\log\Vol_J=\log\int_G \sqrt{H}\,d\mu=\log \sqrt{H}+c,
\end{equation*}
so we can conclude as before.
\end{proof}
\noindent\textit{Example. }Bielawski \cite{Bielawski} shows that, given any compact simple Lie group $G$, $G^c$ admits a $L_G\times R_G$-invariant Ricci-flat K\"ahler metric.

\ 

Now assume $G$ is compact and Abelian, thus a torus $\T^n$. Then $G^c=(\C^*)^n$ and $G^c/G\simeq (\R^+)^n$, but we can also use the isomorphism $\exp: \C^n/2\pi i\,\Z^n\rightarrow (\C^*)^n$ to identify $G^c/G\simeq \R^n$. The geodesics on $G^c/G$ then coincide with the usual straight lines. Furthermore, in this case, (i) we do not need to distinguish between $L$ and $R$, (ii) we can use Part 3 of Theorem \ref{t:Lassalle}, and (iii) Proposition \ref{p:Hamiltonian} shows that the $G$-orbits are Lagrangian. We can thus strengthen the previous result as follows.
\begin{prop}\label{prop:vol_convexity}
Let $G$ be the torus $\T^n$ and $\omega$ be a K\"ahler form on $(\C^*)^n$ such that the $G$-action is Hamiltonian. Consider the volume functional $\Vol:G^c/G\rightarrow\R$ on the space of $G$-orbits. Then $H$ is $G$-invariant, so:
\begin{itemize}
\item $\Ric\leq 0$ iff $\log\Vol$ is convex. It follows that $\Vol$ is convex.
\item $\Ric\geq 0$ iff $\log\Vol$ is concave. It follows that $1/\Vol$ is convex.
\end{itemize}
In particular, if $\Ric=0$ then $\log\Vol$ is linear along each geodesic (straight line) in $G^c/G\simeq\R^n$, so $\Vol$ is exponential.
\end{prop}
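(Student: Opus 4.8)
The plan is to derive Proposition~\ref{prop:vol_convexity} as the Abelian specialization of Proposition~\ref{prop:volJ_convexity}, upgrading each implication to an equivalence by exploiting the three features noted just before the statement. The key starting observation is that, by Proposition~\ref{p:Hamiltonian}, when $G=\T^n$ the $R_G$-orbits are all Lagrangian; hence by observation (2) in the $J$-volume discussion ($\vol_J\leq\vol$ with equality on Lagrangians) we have $\vol_J=\vol$ identically, so $\Vol=\Vol_J$ and $H=\det h(v_i,v_j)$ is the ordinary volume density. Thus all the convexity statements of Proposition~\ref{prop:volJ_convexity} transfer verbatim to $\Vol$: if $\Ric\leq 0$ then $\log\Vol$ is convex, if $\Ric\geq 0$ then $\log\Vol$ is concave, and if $\Ric=0$ then $\log\Vol$ is linear along straight lines. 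This already gives the ``only if''/forward direction of both bullet points.

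\medskip
First I would set up coordinates: using $\exp:\C^n/2\pi i\,\Z^n\to(\C^*)^n$, identify $G^c/G\simeq\R^n$ so that the connection geodesics become ordinary straight lines (as stated), and record that $H$ is $G$-invariant exactly as in the proof of Proposition~\ref{prop:volJ_convexity}, since $\omega$ being Hamiltonian forces $R_G$-invariance of $\omega$ and hence of $h$ on the $J\mathfrak{g}$-distribution. Then, as above, $\log\Vol=\log\sqrt H+c=\tfrac12\log H+c$ as a function on $\R^n$. The forward implications follow immediately from the general proposition.

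\medskip
The genuinely new content, and the main obstacle, is the \emph{converse} direction: convexity of $\log\Vol$ should force $\Ric\leq 0$ (and concavity should force $\Ric\geq 0$). Here is where Part~3 of Theorem~\ref{t:Lassalle} is essential: because $G=\U(1)^n$ and $H$ (equivalently $\sqrt H$, equivalently $\log H$) is $G$-invariant, the implication ``$f$ PSH $\Rightarrow F$ convex'' becomes an equivalence ``$f$ PSH $\iff F$ convex.'' Applying this to $f:=\log\sqrt H=\tfrac12\log H$, convexity of $F=\log\Vol$ on $\R^n$ is equivalent to $\tfrac12\log H$ being PSH on $(\C^*)^n$, i.e. $i\partial\bar\partial(\log H)\geq 0$. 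By observation (1), $\rho=i\partial\bar\partial(-\log H)=\Ric(J\cdot,\cdot)$, so $i\partial\bar\partial(\log H)\geq 0$ is exactly $\rho\leq 0$, i.e. $\Ric\leq 0$. This closes the first equivalence; the second is the same argument applied to $-\tfrac12\log H$, using that concavity of $\log\Vol$ is equivalent to $-\tfrac12\log H$ being PSH, hence to $\rho\geq 0$, i.e. $\Ric\geq 0$.

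\medskip
Finally I would dispatch the consequences: $\log\Vol$ convex gives $\Vol=e^{\log\Vol}$ convex by monotonicity and convexity of $\exp$; $\log\Vol$ concave gives $-\log\Vol$ convex, whence $1/\Vol=e^{-\log\Vol}$ convex by the same reasoning. The $\Ric=0$ case is the simultaneous validity of both equivalences, forcing $\log\Vol$ to be both convex and concave along every line, hence affine, so $\Vol$ is exponential. The only point requiring care is verifying that the hypotheses of Theorem~\ref{t:Lassalle}(3) are genuinely met—namely that $f=\tfrac12\log H$ is a well-defined $G$-invariant function descending to $\R^n$ and that the notion of convexity on $G^c/G$ used in Lassalle's theorem coincides with ordinary convexity on $\R^n$ under the chosen identification—but both follow from the remarks preceding the statement.
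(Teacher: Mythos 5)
Your proposal is correct and follows essentially the same route as the paper, which derives Proposition \ref{prop:vol_convexity} from Proposition \ref{prop:volJ_convexity} using exactly the three ingredients you invoke: the Lagrangian property of the orbits (Proposition \ref{p:Hamiltonian}) to identify $\Vol$ with $\Vol_J$, the $R_G$-invariance of $H$ to reduce $\log\Vol$ to $\tfrac12\log H$ up to a constant, and Part 3 of Theorem \ref{t:Lassalle} to upgrade the implications to equivalences. Your explicit write-up of the converse direction via $\rho=i\partial\bar\partial(-\log H)$ is precisely the intended argument, which the paper leaves implicit.
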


The following fact provides a better understanding of the Ricci-flat case.

\begin{lem}
Assume a $C^2$ function $f:\R^n\rightarrow\R$ is exponential along each line. Then $f(x_1,\dots,x_n)=e^{\sum a_ix_i+b}$ for some $a_i,b\in\R$.
\end{lem}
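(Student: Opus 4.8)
The plan is to reduce the statement to the purely affine claim that a function which is \emph{affine} along every line is globally affine, and then to exponentiate. First I would unwind the hypothesis: ``$f$ is exponential along each line'' means that along each line $s\mapsto p+sv$ one has $f(p+sv)=e^{\alpha s+\beta}$ for constants $\alpha,\beta$ depending on the line. In particular $f>0$ everywhere, so $g:=\log f$ is a well-defined real function on $\R^n$, and taking logarithms turns the hypothesis into: the restriction of $g$ to every line is an affine function of the (signed) line parameter. It therefore suffices to show that such a $g$ has the form $g(x)=\sum a_ix_i+b$, since then $f=e^g$ has the asserted form with the same $a_i$ and $b$.

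Next I would set $h:=g-g(0)$, so that $h(0)=0$, and prove that $h$ is $\R$-linear. For homogeneity, I apply the affinity hypothesis to the line $s\mapsto sx$ through the origin: $g(sx)=\alpha s+\beta$ with $\beta=g(0)$, whence $h(sx)=\alpha s=s\,h(x)$ for all $s\in\R$. For additivity, the main idea is to exploit the midpoint (Jensen) consequence of affinity along \emph{two different lines that share a common midpoint}. Affinity along the segment from $x$ to $y$ gives $g\!\left(\tfrac{x+y}{2}\right)=\tfrac12\bigl(g(x)+g(y)\bigr)$, while affinity along the segment from $0$ to $x+y$ gives $g\!\left(\tfrac{x+y}{2}\right)=\tfrac12\bigl(g(0)+g(x+y)\bigr)$. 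Equating the two expressions yields $g(x)+g(y)=g(0)+g(x+y)$, i.e. $h(x+y)=h(x)+h(y)$. Combined with the homogeneity above, this makes $h$ linear, so $h(x)=\sum a_ix_i$ with $a_i:=h(e_i)$, and hence $g(x)=\sum a_ix_i+b$ with $b:=g(0)$.

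I expect the only genuinely delicate point to be the additivity step. The payoff of the ``two lines through a common midpoint'' trick is that, because affinity is assumed along every line over its full real parameter range (and not merely as a midpoint relation), no continuity or measurability assumption on $f$ is needed: exact additivity comes directly from equating the two midpoint values, and exact $\R$-homogeneity comes for free from restricting to lines through the origin. This sidesteps the usual Hamel-basis pathologies of Cauchy's functional equation. Exponentiating $g$ then yields $f(x_1,\dots,x_n)=e^{\sum a_ix_i+b}$, completing the argument.
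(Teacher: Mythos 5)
Your proof is correct, but it takes a genuinely different route from the paper's. The paper differentiates: writing $f_x=a(y)f$ and $f_y=b(x)f$, it compares the mixed partials $f_{xy}=f_{yx}$ to force $a'=b'$, hence $a,b$ affine with equal slopes, and then uses exponential behaviour along the diagonal $x=y$ to kill the cross term; the argument is carried out for $n=2$ ``to simplify''. You instead pass to $g=\log f$ (legitimate, since $e^{\alpha s+\beta}>0$ forces $f>0$), observe that $g$ is affine along every line, and establish linearity of $h=g-g(0)$ by the two-lines-through-a-common-midpoint identity $g\bigl(\tfrac{x+y}{2}\bigr)=\tfrac12\bigl(g(x)+g(y)\bigr)=\tfrac12\bigl(g(0)+g(x+y)\bigr)$ together with exact $\R$-homogeneity along lines through the origin. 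Your version buys two things: it works for all $n$ uniformly with no extra bookkeeping, and it requires no joint regularity of $f$ --- the paper's computation implicitly assumes $f$ is $C^2$ so that $f_{xy}=f_{yx}$, which does not follow from smoothness along each line separately (harmless in the intended application, where the volume density is smooth, but a genuine hypothesis nonetheless). The one point worth making explicit in your write-up is the degenerate cases of the midpoint argument ($x=y$, and $x+y=0$), both of which reduce to the homogeneity identity $h(sx)=s\,h(x)$.
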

\begin{proof}
To simplify assume $n=2$. Since $f$ is exponential along the lines defined by constant $y$, we find $f_x=a(y)f$ so $f_{xy}=a'f+af_y$. Likewise, considering the lines with constant $x$ we find $f_y=b(x)f$, thus $f_{yx}=b'f+bf_x$. Comparing, we find $a'f+af_y=b'f+bf_x$ thus, substituting, $a'f+abf=b'f+baf$. It follows that $a'=b'$. Since the two functions depend on different variables, this implies $a''=0$, $b''=0$ thus $a(y)=ay+\alpha$, $b(x)=bx+\beta$. Furthermore, the fact $a'=b'$ implies $a=b$. Now consider the line $x=y$. The exponential behaviour implies $f_x+f_y=\gamma f$ along this line. Substituting, we find $(ax+\alpha)f+(ay+\beta)f=\gamma f$ along this line, so $a=0$. It follows that $f_x=\alpha f$, $f_y=\beta f$. Integrating we obtain the desired result.
\end{proof}

\section{Compactifications}\label{s:compactifications}

We are interested in extending the above results to compact manifolds: this will guarantee the existence of critical points for our volume functionals. In other words, we are interested in compactifying group actions. This leads to the following notion.

\begin{definition}\label{def:quasi-homog}
A quasi-homogeneous manifold is a compact complex manifold $M$ together with a complex group $K$ acting (on the right) on $M$ by complex automorphisms such that $M$ contains a dense open $K$-orbit.
\end{definition}
Reversing the above point of view, we will say that $M$ is a \textit{compactification} of $K$ (or, more correctly, of the homogeneous space $H\backslash K$ defined by the isotropy group corresponding to the dense open $K$-orbit).

The map $M\times K\rightarrow M$ is automatically holomorphic wrt both variables \cite{BM}, so each $K$-orbit is a complex submanifold.

\ 

We are mostly interested in the case $K=G^c$. Assume $M$ has complex dimension $n$ and $p\in M$ belongs to the dense open $G^c$-orbit. Consider the holomorphic map $f:G^c\rightarrow M$, $k\mapsto pk$. 
We would like to ensure that its restriction to the TR submanifolds $pG$ in $G^c$ defines TR submanifolds in $M$. 

In general, a holomorphic map does not send TR submanifolds to TR submanifolds. 

\ 

\noindent\textit{Example. }The holomorphic function $f:\C^2\rightarrow\C$, $f(z:=x+iy,w:=\xi+i\eta):=z+iw$ maps the TR $(x,i\eta)$-plane onto a TR line in $\C$, but it  maps the TR $(x,\xi)$-plane onto $\C$.

\ 

If however the isotropy subgroup of $p$ is discrete, then (i) quasi-homogeneity ensures that dim($G^c$)=dim($M$), so the submanifolds $pG$ have real dimension $n$, and (ii) $f$ is a holomorphic immersion. Its differential thus preserves $\C$-linear independence, so $f$ sends TR submanifolds to TR submanifolds. On these submanifolds $\Vol_J$ is strictly positive.

Let us now consider what happens when $p\in M$ does not belong to the dense open $G^c$-orbit. If its $G$-orbit $pG$ were a n-dimensional TR submanifold then its complexification $pG^c$ would intersect the dense open $G^c$-orbit: this contradicts the fact that orbits do not intersect. It follows that either (i) $pG$ has dimension less than $n$, so the vector fields $\tilde{X_i}$ tangent to $pG$ are linearly dependent, or (ii) $pG$ is not TR. In either case the formula for $\vol_J$ shows that $\Vol_J$ extends continuously to such orbits, with value zero.

We may conclude, by compactness, that $\Vol_J$ attains a maximum value on the space of $G$-orbits. This value is positive; it is achieved by a $G$-orbit contained within the dense open $G^c$-orbit.
 
\begin{thm}\label{thm:volJ}
Let $G$ be a compact Lie group and $M$ be a compactification of $G^c$ with finite isotropy subgroup. Assume $M$ is endowed with a $G$-invariant K\"ahler structure. Consider the $J$-volume functional $\Vol_J$ restricted to the set of $G$-orbits within the dense open $G^c$-orbit.
\begin{enumerate}
\item The condition $\Ric<0$ contradicts the compactness of $M$.
\item If $\Ric>0$ then $-\log\Vol_J$ is strictly convex. This implies $1/\Vol_J$ is strictly convex so, by compactness, $\Vol_J$ has a unique critical point: its maximum. 

This point is also a critical point for $\Vol_J$, viewed as a functional on the space of all totally real submanifolds.
\end{enumerate}
\end{thm}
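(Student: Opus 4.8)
The plan is to derive everything from Proposition~\ref{prop:volJ_convexity}, suitably refined to strict inequalities, combined with the boundary behaviour of $\Vol_J$. Since the $G\times G$-invariant K\"ahler structure on $M$ is in particular $R_G$-invariant, Proposition~\ref{prop:volJ_convexity} applies, and on $G^c/G$ we have the identity $\log\Vol_J=\log\sqrt H+c$. For Part 1 I would argue by contradiction. If $M$ were compact with $\Ric<0$, then $\Ric\le 0$ already forces $\log\Vol_J$ to be convex along every geodesic of $G^c/G$. Fix a complete geodesic $\gamma(t)=\pi(\exp(tJX)k)$ with $X\neq 0$. Because $\exp(tJX)$ is a proper, noncompact one-parameter subgroup, $\gamma(t)$ leaves every compact subset of $G^c$ as $t\to\pm\infty$; by compactness of $M$ its limit points lie on degenerate orbits in $M\setminus G^c$, where $\Vol_J$ extends continuously to $0$. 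Hence $\log\Vol_J(\gamma(t))\to-\infty$ at \emph{both} ends. But a convex function of one real variable cannot tend to $-\infty$ at both $+\infty$ and $-\infty$: once its (nondecreasing) one-sided derivative is negative at a point it is negative to the left of it, forcing $+\infty$ at the other end. This contradiction rules out compactness.

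For Part 2 assume $\Ric>0$. Proposition~\ref{prop:volJ_convexity} gives that $-\log\Vol_J=-\log\sqrt H-c$ is convex; the task is to upgrade this to strict convexity. Since $-\log\sqrt H$ is $G$-invariant with $i\partial\bar\partial(-\log\sqrt H)=\tfrac12\rho=\tfrac12\Ric(J\cdot,\cdot)$, the hypothesis $\Ric>0$ says precisely that this function is \emph{strictly} plurisubharmonic. The main obstacle is therefore a strict refinement of Theorem~\ref{t:Lassalle}: strict plurisubharmonicity of a $G$-invariant $f$ implies strict geodesic convexity of its descent $F$. I would establish this by revisiting the Hessian computation underlying Lassalle's theorem. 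Writing $F(\gamma(t))=f(\exp(tJX)k)$ and differentiating twice, the acceleration of $\gamma$ lifts to a vertical, orbit-tangent vector, on which the $G$-invariant $f$ has vanishing derivative; Lassalle's argument then bounds the geodesic Hessian of $F$ below by the Levi form of $f$ on the $(1,0)$-direction $X^{1,0}$ (an exact equality in the abelian case, by Theorem~\ref{t:Lassalle}(3)). Positive-definiteness of $\tfrac12\rho$ forces this to be strictly positive in every geodesic direction, since each corresponds to a nonzero $X\in\mathfrak{g}$. Thus $-\log\Vol_J$ is strictly convex, and $1/\Vol_J=e^{-\log\Vol_J}$ is strictly convex as well.

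To produce the critical point I would use compactness for existence and strict convexity for uniqueness. As $\Vol_J$ extends continuously to the compact orbit space $M/R_G$, vanishing on the boundary and strictly positive on $G^c/G$, it attains its maximum at an interior point, which is a critical point. For uniqueness, the polar (Cartan) decomposition $G^c=\exp(J\mathfrak{g})\cdot G$ shows that any two points of $G^c/G$ lie on a common geodesic; restricting the strictly convex function $1/\Vol_J$ to such a geodesic shows it has at most one critical point. Since the critical points of $\Vol_J$ and of $1/\Vol_J$ coincide, $\Vol_J$ has exactly one critical point, namely its maximum.

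Finally, to see that this orbit is critical for $\Vol_J$ \emph{among all} totally real submanifolds, I would invoke the principle of symmetric criticality (Palais). The functional $\Vol_J$ is invariant under the compact group $R_G$ acting by push-forward on the space of totally real submanifolds, and the $R_G$-orbits are precisely the fixed points of this action; our point is a critical point of the restriction of $\Vol_J$ to this fixed-point set. Since $R_G$ is compact, the differential of $\Vol_J$ at such a symmetric configuration is an $R_G$-invariant linear functional, which therefore vanishes on the non-invariant part of the variation space; hence criticality among orbits upgrades to criticality among all totally real submanifolds. Equivalently, one may argue directly that the first variation is represented by a $G$-invariant ``$J$-mean curvature'' normal field, which the orbit-criticality condition forces to vanish identically. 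The step I expect to be the true obstacle is the strict refinement of Theorem~\ref{t:Lassalle} in the non-abelian case, where the non-integrability of the distribution $J\mathfrak{g}$ introduces correction terms that must be controlled to propagate strict positivity from the Levi form to the geodesic Hessian.
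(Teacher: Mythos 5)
Your proposal is correct and follows essentially the same route as the paper: convexity of $\pm\log\Vol_J$ (via Proposition \ref{prop:volJ_convexity} and Theorem \ref{t:Lassalle}) played against the decay of $\Vol_J$ at the degenerate orbits for Part 1, and compactness plus Hsiang--Palais symmetric criticality for Part 2. The only place you go beyond the paper's own terse argument is in explicitly confronting the strict refinement of Lassalle's theorem needed for strict convexity; your sketch is sound, and in fact no correction terms from the non-integrability of $J\mathfrak{g}$ arise, since for $R_G$-invariant $f$ the function $z\mapsto f(k\exp(zX))$ depends only on $\Imm z$, so the geodesic Hessian of $F$ is exactly the Levi form of $f$ on the nonvanishing holomorphic tangent of that curve.
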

\begin{proof}
As a first step, we need to show that the convexity results of Section \ref{s:convexity} continue to hold in this context. Let $r$ be the order of the isotropy subgroup. Then $G^c$ is a $r:1$ cover of the dense open $G^c$-orbit in $M$. Let us pull back the K\"ahler structure. We are then in the situation of Proposition \ref{prop:volJ_convexity}, so those convexity results hold on $G^c$. By construction the functionals $\Vol_J$ on the dense open $G^c$-orbit in $M$ and on $G^c$ are related by a constant factor $1/r$, so the same convexity results hold in $M$.
We may now prove the theorem.

Part 1 is classical \cite{Kobayashi}: indeed, any compact such $M$ (even in the Riemannian context) must have finite isometry group. Our set-up provides however a new proof. We have already argued that compactness would imply that $\Vol_J$ admits a positive maximum value. On the other hand, if $\Ric<0$ then $\log\Vol_J$ is strictly convex. This implies $\Vol_J$ is strictly convex, so it has no maximum point: contradiction. 

Regarding Part 2, the arguments of \cite{Hsiang} or \cite{Palais} show that the maximal $G$-orbit is critical (but not maximal) within the space of all TR submanifolds.
\end{proof}

The first theorem in Section \ref{s:intro} is the Abelian analogue of Theorem \ref{thm:volJ}, phrased so as to also allow non-compact $M$. The Hamiltonian condition and Proposition \ref{prop:vol_convexity} allow us to replace $\Vol_J$ with $\Vol$.

\paragraph{Examples.}Let us illustrate our results using the manifold $M:=\CP^3$. This requires finding compactified $G^c$-actions on $\CP^3$ with real 3-dimensional $G$-orbits. 

Consider the case $G:=\U(4)$, $G^c=\GL(4,\C)$ with its standard (right) action on $\C^4$: this action descends to $\CP^3$, but the $G$-action is transitive so its orbits are too big. We can use representation theory to find alternative groups with linear actions on $\C^4$ (thus subgroups of $\GL(4,\C)$) and better properties. We shall list some below. In these cases the critical orbits predicted by Theorem \ref{thm:volJ} are well-known in the literature. They are of particular interest because of their relationship with minimal Legendrian submanifolds in $\Sph^7$ and special Lagrangian cones in $\C^4$, see eg \cite{Joyce}.

1. Choose $G:=(\U^1)^4$, $G^c=(\C^*)^4$ with its standard action on $\C^4$. Projectivizing, we obtain the standard description of $\CP^3$ as a toric variety. It has one dense open $(\C^*)^3$-orbit corresponding to the intersection of the 4 standard charts on $\CP^3$. The $\U(1)^3$-orbit of $p:=[1,1,1,1]$ is a minimal Lagrangian torus \cite{HarveyLawson}. 

2. Let us identify $\C^4$ with the space of degree 3 homogeneous polynomials in two variables $z,w$. There exists an irreducible linear action of $G:=\SU(2)$, $G^c=\SL(2,\C)$ on this space. After projectivizing, the isotropy subgroup corresponding to the dense open $G^c$-orbit has order $r=12$. Chiang \cite{Chiang}, see also \cite{EvansLekili}, showed that the $G$-orbit of $p:=[1,0,0,1]\simeq [z^3+w^3]$ is a Lagrangian submanifold diffeomorphic to a quotient of $\SU(2)\simeq\Sph^3$; \cite{Joyce}, \cite{Ohnita} study it as a minimal Lagrangian submanifold.

3. Let us identify $\C^4$ with the space of matrices $\gl(2,\C)$. Right multiplication provides an action of $G:=\SU(2)$, $G^c=\SL(2,\C)$: it can be viewed as the sum of two invariant subspaces, corresponding to two copies of $\C^2$ with the standard $G$, $G^c$-actions. After projectivizing, the invertible matrices in $\gl(2,\C)$ form a dense open $G^c$-orbit in $\CP^3$; the isotropy subgroup has order 2. The $G$-orbit of $p:=Id$ is a minimal Lagrangian submanifold diffeomorphic to $\Z_2\backslash\SU(2)\simeq\RP^3$, see eg \cite{Joyce}. 

\ 

A final remark: when one compactifies a group $K$ (rather than a homogeneous space), it is sometimes of interest to extend to $M$ both the left and the right group actions. This problem can be cast in the framework of Definition \ref{def:quasi-homog} as follows. Consider the group $K\times K$ and the diagonal subgroup $\Delta\simeq K$. The map 
\begin{equation*}
K\times K\rightarrow K, \ \ (k_1,k_2)\mapsto k_1^{-1}k_2
\end{equation*}
induces an equivariant diffeomorphism between the homogeneous space $\Delta\backslash K\times K$, endowed with its natural (right) $K\times K$-action,  and the manifold $K$, endowed with the (right) $K\times K$-action $(k_1,k_2)\cdot k:=k_1^{-1}kk_2$. Compactifying this homogeneous space then produces the desired result. 

\ 

\noindent\textit{Example. }After projectivizing, $\CP^3$ can be seen as the two-sided compactification of $G:=\SU(2)$, $G^c=\SL(2,\C)$ acting on $\gl(2,\C)\simeq\C^4$ via the standard adjoint action. This action splits into the sum of the trivial action on the subspace generated by $Id$ and an irreducible action on $\LieSL(2,\C)$, isomorphic to the space of degree 2 homogeneous polynomials in $z,w$. The $G$-orbits and isotropy groups do not have the correct dimensions to be studied as in this paper. However, the $G\times G$-orbit of $Id$ coincides with the minimal Lagrangian submanifold discussed in Example 3, above.

\section{Minimal Lagrangian submanifolds}

Our convexity results Proposition \ref{prop:volJ_convexity} and \ref{prop:vol_convexity} can be viewed as a geometric reformulation of Lassalle's analytic statement, Theorem \ref{t:Lassalle}. They provide, via Theorem \ref{thm:volJ}, both existence and global uniqueness results for critical orbits. It seems of interest to also mention what can be achieved using alternative techniques. Compact K\"ahler-Einstein (KE) manifolds offer a good framework. In this context we may rely on two facts.

\ 

1. Assume $G$ acts by isometries on $M$. Up to a quotient we may assume that the action is effective, ie that $G$ is a subgroup of the isometry group. It then automatically preserves the complex structure \cite{Kobayashi}, ie $G$ maps into the group of complex automorphisms of $M$. The universality property then implies that $G^c$ also maps into this group. 
 
\ 

\noindent\textit{Remark. }When $M$ is compact and positive KE, the Lie algebra of the isometry group is TR in the Lie algebra of the automorphism group \cite{Kobayashi}. This however does not imply that $G$-orbits are TR in $M$. Consider for example the case $M:=\CP^n$: then $G:=\U(n+1)$ acts transitively on $M$.

\ 

2. Let $M$ be a positive or negative KE manifold. \cite{LP1} Proposition 5.3 shows that any critical point of $\Vol_J$ is automatically minimal and Lagrangian. Totally real submanifolds and the $\Vol_J$ functional are thus a useful tool for detecting minimal Lagrangian submanifolds.

\ 

\noindent\textit{Remark. }This corresponds to the apparently miraculous fact that, in the examples in Section \ref{s:compactifications} with $M=\CP^3$, the critical TR fibre is always minimal Lagrangian. Moments maps provide an alternative explanation.

\ 

These facts lead to the following result. Compared to the previous section, it does not provide global uniqueness but it requires no knowledge of isotropy subgroups.

\begin{thm}\label{thm:minlag}
Let $(M,\omega)$ be a compact complex $n$-dimensional manifold, endowed with a positive K\"ahler-Einstein structure. Let $G$ be a compact Lie group acting isometrically on $M$, whose generic orbit is $n$-dimensional and TR. Then $M$ contains a minimal Lagrangian submanifold.

This submanifold is a $G$-orbit in $M$; it is isolated within the class of minimal Lagrangian $G$-orbits.
\end{thm}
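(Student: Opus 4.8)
The plan is to realize the problem as a variational problem for the $J$-volume on the space of $G$-orbits, produce a critical orbit by convexity, and then upgrade it from merely totally real to Lagrangian using the Kähler--Einstein condition. First I would record the structural reductions. Positive Kähler--Einstein means $\Ric=\lambda\omega$ with $\lambda>0$, so $\Ric>0$ and, by Bochner/Myers, $b_1(M)=0$; the isometric $G$-action is then Hamiltonian, with an equivariant moment map $\mu:M\to\mathfrak{g}^*$. Since on a positive Kähler--Einstein manifold the identity component of the isometry group acts by biholomorphisms, the action complexifies to a holomorphic $G^c$-action. Because the generic $G$-orbit has real dimension $n=\dim_\C M$, the complexified action has an open dense orbit $\Omega\cong G^c/\Gamma$ (with $\Gamma$ discrete), and for $p\in\Omega$ one has $\mathfrak g\cdot p+J\mathfrak g\cdot p=T_pM$; by the dimension count the generic $G$-orbit $L$ then satisfies $T_pL\oplus JT_pL=T_pM$, i.e.\ it is totally real. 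Thus $M$ is a compactification of $\Omega$, the $G$-orbits fibre $\Omega$ by totally real $n$-dimensional submanifolds, and (exactly as in the discussion preceding Theorem \ref{thm:volJ}) the $J$-volume density $\sqrt H$, with $H:=\det h(\tilde X_i,\tilde X_j)$ for a basis $X_i$ of $\mathfrak g$, degenerates to $0$ on the lower-dimensional orbits of $M\setminus\Omega$.

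Next I would produce a critical orbit. On the orbit space $G\backslash\Omega\cong G\backslash G^c/\Gamma$ the functional $\Vol_J$ is governed by the convexity machinery of Theorem \ref{t:Lassalle} and Proposition \ref{prop:volJ_convexity}: since $\Ric>0$ the function $-\log\Vol_J$ is strictly convex, hence $1/\Vol_J$ is strictly convex. Combined with the boundary behaviour $\Vol_J\to 0$, this forces a unique interior maximum $L_0$, which is in particular a critical orbit. By the principle of symmetric criticality (\cite{Hsiang}, \cite{Palais}), $L_0$ is then critical for $\Vol_J$ viewed as a functional on all totally real submanifolds. This reproduces, in the present one-sided setting, the conclusion of Theorem \ref{thm:volJ}(2).

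The heart of the argument, and the step I expect to be the main obstacle, is showing that $L_0$ is Lagrangian. Here I would exploit the identity $\rho=i\partial\bar\partial(-\log H)$ on $\Omega$ (observation (1) after the $\vol_J$ formula), which together with $\rho=\lambda\omega$ exhibits $-\log H$ as an honest $G$-invariant potential for $\rho$ on $\Omega$. Running the computation of Proposition \ref{p:Hamiltonian} with this potential produces a moment map $\nu\cdot X:=d^c(\log H)(\tilde X)$ for the form $\rho=\lambda\omega$. On the other hand, the criticality of $\Vol_J$ at $L_0$ is precisely the vanishing of the derivative of $\tfrac12\log H$ along the geodesic directions $J\mathfrak g$, i.e.\ $\nu|_{L_0}=0$. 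Since two moment maps for the same symplectic form differ by a constant in $[\mathfrak g,\mathfrak g]^\#$, it follows that $\mu|_{L_0}\in[\mathfrak g,\mathfrak g]^\#$, whence $L_0\subseteq\mu^{-1}([\mathfrak g,\mathfrak g]^\#)$ is Lagrangian by Proposition \ref{p:Hamiltonian}. The delicate points to nail down are that $-\log H$ is globally defined on the Stein space $\Omega$ (where the orbits are non-degenerate) so that the potential/moment-map computation is genuinely valid, and the bookkeeping identifying the critical value of $\nu$ with $0$ together with its normalising constant as an element of $[\mathfrak g,\mathfrak g]^\#$.

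Finally I would conclude and establish isolation. At the Lagrangian orbit $L_0$ one has $\vol_J=\vol$ (the equality case of $\vol_J\le\vol$, \cite{LP1}); since $\Vol_J\le\Vol$ on all totally real submanifolds with equality at $L_0$, the nonnegative difference $\Vol-\Vol_J$ attains its minimum $0$ at $L_0$. As total reality is an open condition, every nearby variation of $L_0$ is admissible, so $d\Vol=d\Vol_J=0$ at $L_0$ and $L_0$ is minimal; hence $L_0$ is a minimal Lagrangian $G$-orbit. For isolation, the same difference argument shows that every minimal Lagrangian $G$-orbit is a critical orbit of $\Vol_J$, and the strict convexity of $1/\Vol_J$ makes such critical orbits isolated (indeed unique within the top-dimensional stratum), giving the final assertion.
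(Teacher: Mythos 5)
Your skeleton --- generic orbits are totally real, maximize $\Vol_J$ over the compact space of $G$-orbits, upgrade to a genuine critical point by symmetric criticality (\cite{Hsiang}, \cite{Palais}), and conclude minimal Lagrangian --- is exactly the paper's route. But you replace the paper's two black boxes (the identification of critical points of $\Vol_J$ with minimal Lagrangians, which the paper takes from \cite{LP1} Proposition 5.3, and the isolation statement, which it takes from \cite{Pacini} Theorem 6) with arguments that do not go through under the hypotheses of this theorem.

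The main gap is your use of the convexity machinery. Proposition \ref{prop:volJ_convexity} and Theorem \ref{thm:volJ} are proved in the group-compactification setting, where the open orbit is $G^c$ itself, the orbits in question are $R_G$-orbits, and --- crucially --- the metric is $G\times G$-invariant, which is what makes $H$ constant along each orbit and turns $\log\Vol_J$ into $\log\sqrt H$ up to a constant. Under the hypotheses of Theorem \ref{thm:minlag} you only have one $G$-action, the open $G^c$-orbit is $G^c/\Gamma$ for some isotropy $\Gamma$, and there is no bi-invariance; so neither the identity $\log\Vol_J=\log\sqrt H+c$ nor Lassalle's theorem in the form needed for $-\log\Vol_J$ to be strictly convex is available. (This is precisely why the paper proves existence here by bare compactness of the orbit space plus the vanishing of $\Vol_J$ on degenerate orbits, and reserves the convexity/uniqueness conclusions for Theorem \ref{thm:volJ} and Corollary \ref{cor:minlag}.) Your existence conclusion survives without convexity, but your isolation argument, which rests entirely on strict convexity of $1/\Vol_J$, does not; the paper instead invokes \cite{Pacini} Theorem 6, and indeed the theorem claims only isolation among minimal Lagrangian $G$-orbits, not uniqueness.

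The "heart" of your argument --- deducing the Lagrangian condition from a moment map built out of $-\log H$ --- is also not complete as stated. Criticality of $\Vol_J$ among orbits controls only the $G$-averaged first variation, i.e.\ $\int_{L_0}J\tilde X(\sqrt H)\,d\mu=0$, which by equivariance constrains essentially only the central directions of $\mathfrak g$; and even after symmetric criticality the first variation of $\Vol_J$ among arbitrary submanifolds is not simply $d(\log\sqrt H)(J\tilde X)$ pointwise --- it involves the full $J$-mean-curvature analysis of \cite{LP1}. So the step ``criticality $\Rightarrow \nu|_{L_0}=0 \Rightarrow L_0$ Lagrangian'' needs the content of \cite{LP1} Proposition 5.3 and cannot be obtained from Proposition \ref{p:Hamiltonian} alone. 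If you want a self-contained proof you would have to reproduce that first-variation computation; otherwise cite it as the paper does.
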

\begin{proof}
The general theory of group actions implies that generic orbits (also called principal) have maximal dimension. This means that all $G$-orbits have dimension less than or equal to $n$, so $\Vol_J$ extends continuously to zero on the other orbits. By compactness, $\Vol_J$ admits a positive maximum value, thus a critical point. The arguments of \cite{Hsiang} or \cite{Palais} then show that this orbit is critical (but not maximal) within the space of all submanifolds. \cite{LP1} Proposition 5.3 shows that this orbit is minimal Lagrangian. \cite{Pacini} Theorem 6 shows that minimal Lagrangian orbits are isolated. 
\end{proof}
This strengthens \cite{Pacini} Theorem 5, which assumed the existence of at least one Lagrangian $G$-orbit. 

\ 

\noindent\textit{Remark. }Uniqueness holds only within the category of orbits, and for the given group action. For example, any geodesic in $\CP^1$ is a minimal Lagrangian orbit for an appropriate group action, but it is not isolated in any broader sense. 

\ 

In the context of group compactifications, convexity yields global uniqueness.

\begin{cor}\label{cor:minlag}
In the setting of Theorem \ref{thm:volJ}, assume $(M,\omega)$ is positive K\"ahler-Einstein. Then $M$ contains a unique minimal Lagrangian $G$-orbit in the dense open $G^c$-orbit.
\end{cor}
The example following Lemma \ref{l:ddbar} shows that $\omega$ is $d$-exact when pulled back to $G^c$; by equivariance, it is true also on the dense open $G^c$-orbit.

We refer to \cite{WangZhu}, respectively \cite{Delcroix}, for the existence theory for KE metrics in toric manifolds, respectively group compactifications, and to \cite{LP3} for a study of the critical points of the $J$-volume functional.

\bibliographystyle{amsplain}
\bibliography{PSH_biblio}

\end{document}